\title{\LARGE \bf
Towards Almost Global Synchronization on the Stiefel Manifold
}
\author{Johan Markdahl, Johan Thunberg, and Jorge Gon\c{c}alves
	\thanks{J. Markdahl, J. Thunberg, and J. Gon\c{c}alves are with the Luxembourg Centre for Systems Biomedicine (\textsc{lcsb}),
		University of Luxembourg. J.~Markdahl is the corresponding author:
		{\tt\small markdahl@kth.se}}%
}
\newcommand*\laplac{%
	\text{%
		\fontencoding{LS1}%
		\fontfamily{stixfrak}%
		\fontseries{\textmathversion}%
		\fontshape{n}%
		\selectfont\symbol{"CF}}}
\newcommand*\textmathversion{\csname textmv@\math@version\endcsname}
\newcommand*\textmv@normal{m}
\newcommand*\textmv@bold{b}
\DeclareMathOperator{\inabla}{\raisebox{1.1pt}{\scaleobj{0.85}{\laplac}}}
\newcommand{\ve}[2][]{\ensuremath{\boldsymbol{\mathrm{#2}}}_{#1}}
\newcommand{\vet}[2][]{\ensuremath{\smash{\boldsymbol{\mathrm{#2}}^{\top}_{#1}}}}
\newcommand{\vd}[2][]{\ensuremath{\dot{\boldsymbol{\mathrm{#2}}}_{#1}}}
\newcommand{\ma}[2][]{\ensuremath{\boldsymbol{\mathrm{#2}}}_{#1}}
\newcommand{\mat}[2][]{\ensuremath{\boldsymbol{\mathrm{#2}}^{\!\top}_{#1}}}
\newcommand{\md}[2][]{\ensuremath{\dot{\boldsymbol{\mathrm{#2}}}}_{#1}}
\DeclareMathOperator{\vect}{\ensuremath{\mathrm{vec}}}
\DeclareMathOperator{\skews}{\ensuremath{\mathrm{skew}}}
\DeclareMathOperator{\syms}{\ensuremath{\mathrm{sym}}}
\newcommand{\R}{\ensuremath{\mathds{R}}}
\newcommand{\N}{\ensuremath{\mathds{N}}}
\newcommand{\SOT}{\ensuremath{\mathsf{SO}(3)}}
\newcommand{\SO}{\ensuremath{\mathsf{SO}(n)}}
\newcommand{\so}{\ensuremath{\mathsf{so}(n)}}
\newcommand{\St}{\ensuremath{\mathsf{St}(p,n)}}
\newcommand{\Sn}{\ensuremath{\mathsf{S}^{n}}}
\newcommand{\GC}{\ensuremath{\mathsf{S}^1}}
\newcommand{\ie}{\textit{i.e.}, }
\newcommand{\eg}{\textit{e.g.}, }
\newcommand{\mtr}{\hspace{-0.3mm}\ensuremath{^\top}}
\newcommand\raiseT[2]{\setbox0\hbox{$#1{#2}$}\raise\dp0\box0}
\newcommand{\V}{\ensuremath{\mathcal{V}}}
\newcommand{\E}{\ensuremath{\mathcal{E}}}
\newcommand{\etc}{\emph{etc.\ }}
\DeclareMathOperator{\trace}{\ensuremath{\mathrm{tr}}}
\DeclareMathOperator{\im}{\ensuremath{\mathrm{Im}}}
\newcommand{\Ni}{\ensuremath{\mathcal{N}_i}}
\newcommand{\raisemath}[1]{\mathpalette{\raisem@th{#1}}}
\newcommand{\raisem@th}[3]{\raisebox{#1}{$#2#3$}}
\newcommand{\ts}[2][]{\ensuremath{\mathsf{T}_{#2}#1}}
\definecolor{kthbluergb}{RGB}{25,84,166}
\definecolor{kthbluecmyk}{cmyk}{1,0.55,0,0}
\definecolor{kthblueA}{RGB}{25,84,166}
\definecolor{kthblueB}{RGB}{46,124,192}
\definecolor{kthblueC}{RGB}{112,153,209}
\definecolor{kthblueD}{RGB}{164,186,225}
\definecolor{kthblueE}{RGB}{211,220,241}
\newcounter{counter} 
\newtheorem{theorem}[counter]{Theorem}
\newtheorem{definition}[counter]{Definition}
\newtheorem{example}[counter]{Example}
\newtheorem{remark}[counter]{Remark}
\newcounter{parentnumber}
\begin{document}

\maketitle
\thispagestyle{empty}
\pagestyle{empty}

\begin{abstract}
A graph $\mathcal{G}$ is referred to as $\GC$-synchronizing if, roughly speaking, the Kuramoto-like model whose interaction topology is given by $\mathcal{G}$ synchronizes almost globally. The Kuramoto model evolves on the unit circle, \ie the $1$-sphere $\smash{\GC}$. This paper concerns generalizations of the Kuramoto-like model and the concept of synchronizing graphs on the Stiefel manifold $\St$. Previous work on state-space oscillators have largely been influenced by results and techniques that pertain to the $\smash{\GC}$-case. It has recently been shown that all connected graphs are $\smash{\Sn}$-synchronizing for all $n\geq2$. The previous point of departure may thus have been overly conservative. The $n$-sphere is a special case of the Stiefel manifold, namely $\mathsf{St}(1,n+1)$. As such, it is natural to ask for the extent to which the results on $\mathcal{S}^{n}$ can be extended to the Stiefel manifold. This paper shows that all connected graphs are $\St$-synchronizing provided the pair $(p,n)$ satisfies $p\leq \tfrac{2n}{3}-1$. 
\end{abstract}

\section{Introduction}

\noindent Scalability is a key advantage of distributed approaches to feedback control of multi-agent systems \cite{sarlette2009geometry}. It is achieved by feedback laws with linear computational complexity that can be executed when interactions between most agents is indirect, \eg when the communications topology is given by a path or cycle graph. An often overlooked aspect of scalability is control performance at a large distance from nominal operating conditions. Consider the case of $N$ homogeneous agents, the state of each belonging to a compact manifold $\mathcal{M}$. Many results in the literature concerns the case when  all agents belong to some convex subset of $\mathcal{M}$ \cite{afsari2011riemannian,tron2013riemannian,hartley2013rotation}. A typical result is the guaranteed convergence of a synchronization algorithm  \cite{zhu2013synchronization,thunberg2014distributed,lageman2016consensus}. If the initial states follow a uniform distribution on $\mathcal{M}$, then the probability that all agents are contained in such a convex set decreases exponentially with $N$. This is a case of poor scaling. Ideally, the probability of convergence should be $1$ independently of $N$. Such performance is achieved by almost globally convergent algorithms. This paper concerns the problem of establishing almost global convergence of a class of continuous time consensus protocols for multi-agent system that evolve on the compact, real Stiefel manifold $\St$. Each of the systems under consideration is an intrinsic gradient descent flow of a basic, quadratic potential function.


A graph $\mathcal{G}$ is $\GC$-synchronizing if, roughly speaking, the consensus manifold is an almost globally stable equilibrium manifold of a Kuramoto-like oscillator model with topology $\mathcal{G}$. Examples of such graphs include the complete graph, acyclic graphs, and sufficiently dense graphs \cite{dorfler2014synchronization}. The survey \cite{sepulchre2011consensus} ponders the question of which combinations of graphs, manifolds, and consensus protocols lead to almost global synchrony. A key result is that not all undirected graphs are $\GC$-synchronizing, but there are protocols that yield other, almost globally synchronizing closed-loop systems on $\GC$  \cite{scardovi2007synchronization,sarlette2009geometry}. A generalization to $\SOT$ has been established \cite{tron2012intrinsic}. Moreover, some  protocols also converge in the case of quasi-strongly connected digraphs on the Stiefel manifold \cite{thunberg2017dynamic}. The control strategy is based on the use of auxiliary variables that must be communicated between agents. As such,  it is intended for use in engineering systems rather than as a model of self-organizing systems of coupled oscillators that are observed in nature.



Consensus protocols for agents whose dynamics evolve over linear spaces are well understood \cite{mesbahi2010graph}. Moving forward, it is natural to look at homogeneous spaces \cite{sarlette2009consensus}. All points on a homogeneous space are similar, making them well-suited for hosting multi-agent systems where relative and aggregate quantities are the main focus. This paper studies a consensus protocol on a homogeneous space: a Kuramoto-like model on the compact, real Stiefel manifold. Special cases of $\St$ include the $(n-1)$-sphere when $p=1$, the special orthogonal group when $p=n-1$, and the orthogonal group when $p=n$. As such, these results are of relevance in a number of applications including cooperative reduced and full attitude control in the cases of $(p,n)=(1,3)$ \cite{olfati2006swarms,li2014unified} and $(p,n)=(2,3)$ \cite{sarlette2009autonomous} respectively. 
The Stiefel manifold for $p\in\{2,\ldots,n-2\}$ is typically not used to model physical systems, which is the main area of application for continuous time control systems (although application of continuous time dynamical systems theory to numerical linear algebra problems is not unheard of \cite{helmke2012optimization}). 

The authors of this paper have showed that all connected graphs are $\Sn$-synchronizing for all $n\in\N\backslash\{1\}$ \cite{markdahl2017tac}. This result is of interest since it is unexpected; it could not have been interpolated from the previous research concerning special cases such as $\GC$-synchronizing graphs for the Kuramoto model and more general findings concerning almost global synchrony on $\SOT$ \cite{tron2012intrinsic}. This paper generalizes those results to the Stiefel manifold where we show that all connected graphs are $\St$-synchronizing provided the pair $(p,n)$ satisfies $p\leq\tfrac{2n}{3}-1$. This inequality is imposed for technical reasons  and is unlikely to have any interesting interpretation. Previous research on almost global synchronization over homogeneous manifolds departs from the negative result that not all undirected graphs are $\GC$-synchronizing \cite{sarlette2009geometry,sarlette2009consensus}. We show that such pessimism may be unfounded since the circle is actually a pathological case, providing hope that results akin to those of this paper also apply to other Riemannian manifolds. 

\section{Problem Formulation}

\subsection{Preliminaries}

\noindent The following notation is used in this paper. 
The Euclidean inner product of $\ma{X}, \ma{Y}\in\R^{n\times p}$ is $g_e(\ma{X},\ma{Y})=\langle\ma{X},\ma{Y}\rangle=\trace\mat{X}\ma{Y}$. The norm of $\ma{X}$ is given by $\|\ma{X}\|=\smash{\langle\ma{X},\ma{X}\rangle^{\frac12}}$, \ie the Frobenius norm. The compact real Stiefel manifold is considered an embedded matrix manifold in $\R^{n\times p}$ \cite{absil2009optimization},
\begin{align*}
\St=\{\ma{S}\in\R^{n\times p}\,|\,\mat{S}\ma{S}=\ma{I}\}.
\end{align*}
The pair $(\St,g_e)$ forms a smooth Riemannian manifold. An important Stiefel manifold is $\mathsf{St}(1,n+1)$, \ie the $n$-sphere which we denote $\Sn$.

Define the projections $\skews:\R^{n\times n}\rightarrow\so:\ma{X}\mapsto\tfrac{1}{2}(\ma{X}-\mat{X})$ and $\syms:\R^{n\times n}\rightarrow\so^\perp:\ma{X}\mapsto\tfrac12(\ma{X}+\mat{X})$. The tangent space of $\St$ at $\ma{X}$ is given by
\begin{align*}
\ts[\St]{\ma{S}}&=\{\ma{\Delta}\in\R^{n\times p}\,|\,\syms\mat{S}\ma{\Delta}=\ma{0}\}.
\end{align*}
The projection on the tangent space, $\Pi:\R^{n\times p}\times\St\rightarrow\ts[\St]{\ma{S}}$, is given by 
\begin{align*}
\Pi(\ma{X},\ma{S})=\ma{S}\skews\mat{S}\ma{X}+(\ma[n]{I}-\ma{S}\mat{S})\ma{X}.
\end{align*}
From a computational perspective, it is sometimes preferable to use the equivalent expression
\begin{align*}
\Pi(\ma{X},\ma{S})=\ma{X}-\ma{S}\syms\mat{S}\ma{X}.
\end{align*}
%

The gradient on $\St$ (in terms of the Euclidean inner product $g_e$) of a function $U:\St\rightarrow\R$ is given by
\begin{align*}
\inabla U=\Pi\,\nabla V,
\end{align*}
where $V$ is any smooth extension of $U$ on $\R^{n\times p}$, and $\nabla$ denotes the gradient in Euclidean space.

Each element $i\in\V$ also denotes an agent. Items associated with a specific agent $i$ carry the subindex $i$; we let $\ma[i]{S}\in\St$ denote the state of an agent, $\Pi_i$ the projection onto the tagent space $\ma[i]{S}$, $\inabla_i U$ the gradient of $U$ with respect to $\ma[i]{S}$, \etc 

\subsection{Distributed Control Design}

\noindent The consensus submanifold $\mathcal{C}$ of an analytic Riemannian manifold $(\mathcal{M},g)$ is the set of equilibria
\begin{align*}
\mathcal{C}&=\{(x_i)_{i=1}^N\in\mathcal{M}^N\,|\,x_i=x_j,\,\forall\,\{i,j\}\in\E\}.
\end{align*}
The consensus set is a manifold, in fact it is diffeomorphic to $\mathcal{M}$ using the map $
(x_i)_{i=1}^N\mapsto x_1$.

Given a graph $(\V,\E)$, define the potential function $U:\mathcal{M}^N\rightarrow\R$ by
\begin{align*}
U=\sum_{e\in\E}f_{ij}(d_g(x_i,x_j)),
\end{align*}
where $f_{ij}:\R\rightarrow[0,\infty)$, $d_g$ is the geodesic distance on $\mathcal{M}$ in terms of $g$, and $e$ is on the form $\{i,j\}$. The consensus seeking system on $\mathcal{M}$ obtained from $U$ is the gradient descent flow
\begin{align}
(\dot{x}_i)_{i=1}^N=(-\inabla_i U)_{i=1}^N,\label{eq:descent}
\end{align}
where $x_i(0)\in\mathcal{M}$ for all $i\in\V$.

Agent $i$ does not have access to $U$, but can calculate
\begin{align*}
U_i=\tfrac12\sum_{j\in\Ni}f_{ij}(d_g(x_i,x_j))
\end{align*}
at its current position. Symmetry of $d_g$ gives $U=\sum_{i\in\V}U_i$ whereby it follows that $\inabla_i U_i=\inabla_i U$. From a control design perspective, we can assume that the dynamics of each agent take the form $\dot{x}_i=u_i$ with $u_i\in\ts[\mathcal{M}]{i}$. Since agent $i$ can evaluate $U_i$ at its current position, it is reasonable to assume that it can also calculate $u_i=-\inabla_i U_i$.

\subsection{$\St$-Synchronizing Graphs}

\begin{definition}
A graph $\mathcal{G}$ is $\mathcal{M}$-synchronizing if all minimizers of $U$ belong to $\mathcal{C}$.
\end{definition}

The property of being $\mathcal{M}$-synchronizing, which we have adopted from \cite{sarlette2009geometry}, does not explicitly reference the specific function $U$ under consideration. For the purpose of this paper, 
we limit consideration to potential functions $U$ on matrix manifolds $\mathcal{M}\subset\R^{n\times m}$ of the following form
\begin{align}
U=\tfrac{1}{2}\sum_{e\in\E}a_{ij}\|\ma[i]{X}-\ma[j]{X}\|^2,\label{eq:U}
\end{align}
where $\ma[i]{X}\in\R^{n\times m}$, and the constants $a_{ij}$ are strictly positive and symmetric, \ie  $a_{ji}=a_{ij}$. On the Stiefel manifold $\St\subset\R^{n\times p}$, this reduces to
\begin{align}\label{eq:VSt}
U=\sum_{e\in\E}a_{ij}(p-\langle\ma[i]{S},\ma[j]{S}\rangle),
\end{align}
since $\|\ma[i]{S}\|^2=p$ for all $i\in\V$.

\begin{definition}
An equilibrium manifold $\mathcal{Q}$ of a dynamical system $\Sigma$ on an analytic Riemannian manifold $(\mathcal{M},g)$ is referred to as almost globally asymptotically stable (AGAS) if it is stable and the flow $\Phi(t,x_0)$ of $\Sigma$ satisfies $\lim_{t\rightarrow\infty}d_g(\mathcal{Q},\Phi(t,x_0))=0$ for all $x_0\in\mathcal{M}\backslash\mathcal{N}$, where $\mathcal{N}\subset\mathcal{M}$ has measure zero on $\mathcal{M}$.
\end{definition}

It is not immediately clear that $\mathcal{G}$ being $\mathcal{M}$-synchronizing implies that $\mathcal{C}$ is an AGAS equilibrium of \eqref{eq:descent}. Since \eqref{eq:descent} is a gradient descent of $U$, it cannot converge to any maximum of $U$. Morover, any saddle point of $U$ is unstable. However, a set of saddle points may still have a region of attraction with positive measure, in which case $\mathcal{C}$ cannot be AGAS. In \cite{markdahl2017tac}, we show that any connected graph is $\Sn$-synchronizing for the potential function $U=\sum_{\{i,j\}\in\E}a_{ij}(1-\langle\ve[i]{x},\ve[j]{x}\rangle)$. Moreover, we prove that $\mathcal{C}$ is AGAS. However, obtaining such results is not trivial. We limit the scope of this paper to  characterizing $\St$-synchronizing graphs. Sufficient conditions for $\mathcal{C}$ to be AGAS will be established in future work. To give the reader an idea of the current state of the art in terms of characterizing $\mathcal{M}$-synchronizing graphs, we provide the following examples.

\begin{example}
On $\R^n$, for $U$ given by the potential function \eqref{eq:U} using the Euclidean vector norm, the system \eqref{eq:descent} becomes
\begin{align*}
\vd[i]{x}&=\sum_{j\in\Ni}a_{ij}(\ve[j]{x}-\ve[i]{x}),\,\forall\,i\in\V.
\end{align*}
The consensus manifold of this system is well-known to be globally asymptotically stable (which also implies AGAS), although that fact is usually not expressed by saying that any connected graph is $\R^n$-synchronizing.
\end{example}

\begin{example}
On $\GC$, the dynamics \eqref{eq:descent} can be expressed in polar coordinates $\vartheta_i\in\R$ for all $i\in\V$ as
\begin{align}\label{eq:kuramoto}
\dot{\vartheta}_i&=\sum_{j\in\Ni}a_{ij}\sin(\vartheta_j-\vartheta_i),\,\forall\,i\in\V.
\end{align}
For a complete graph $\mathcal{G}$, \eqref{eq:kuramoto} is equivalent to the Kuramoto model in the case of homogeneous agents, \ie identical oscillator frequencies. The cycle graph
\begin{align*}
\mathcal{G}_N=(\V,\{\{i,j\}\subset\V\,|\,|i-j|=1 \}\cup\{\{1,N\}\})
\end{align*}
is $\GC$-synchronizing for $N\leq4$ but not for $N\geq5$ \cite{sarlette2009geometry}. The problem of characterizing all  $\GC$-synchronizing graphs is open  \cite{sarlette2009geometry,dorfler2014synchronization}.
\end{example}

\begin{example}
On $\Sn$, the dynamics are given by
\begin{align*}
\vd[i]{x}&=\sum_{j\in\Ni}a_{ij}\ve[j]{x}-\Bigl\langle\sum_{j\in\Ni}a_{ij}\ve[j]{x},\ve[i]{x}\Bigr\rangle\ve[i]{x},\,\forall\,i\in\V.
\end{align*}
Any connected graph is $\Sn$-synchronizing for all $n\geq2$ \cite{markdahl2017tac}. 
\end{example}

\subsection{Problem Statement}

\noindent The aim of this paper is to identify instances of the Stiefel manifold that are $\St$-synchronizing. We know that all connected graphs are $\mathsf{St}(1,n)$-synchronizing for $n\geq3$ \cite{markdahl2017tac}. We also know that not all connected graphs are $\GC$-synchronizing \cite{sarlette2009geometry}, nor $\SOT$-synchronizing \cite{tron2012intrinsic}. Note that $\GC\simeq\mathsf{SO}(2)\simeq\mathsf{St}(1,2)$ and $\SOT\simeq\mathsf{St}(2,3)$. The question, to which we give a partial answer, is how the results of \cite{markdahl2017tac} extends to the cases of $2\leq p\leq n-2$. 

\section{Main Result}

\noindent Theorem \ref{th:main} is the main result of this paper. First we outline the proof. The details are provided in Section \ref{sec:critical} to \ref{sec:integer}.


\begin{theorem}\label{th:main}
Let the pair $(p,n)$ satisfy $p\leq\tfrac{2n}{3}-1$. All minimizers of the potential function $U$ given by \label{eq:VSt} with $a_{ij}=1$ for all $e\in\E$ belong to the consensus manifold; \ie all connected graphs are $\St$-synchronizing.
\end{theorem}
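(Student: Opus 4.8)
Since $U=\tfrac12\sum_{e\in\E}a_{ij}\|\ma[i]{S}-\ma[j]{S}\|^{2}\ge 0$ and vanishes exactly on the nonempty consensus manifold $\mathcal{C}$, the global minimum of $U$ equals $0$ and is attained precisely on $\mathcal{C}$; hence it suffices to show that no critical point of $U$ lying outside $\mathcal{C}$ is a local minimum. I would begin (Section~\ref{sec:critical}) by recording the first-order conditions. With $\ma[i]{M}:=\sum_{j\in\Ni}a_{ij}\ma[j]{S}$ one computes $\inabla_i U=-\Pi_i\ma[i]{M}$, so a configuration is a critical point of $U$ if and only if, for every $i$, the matrix $\ma[i]{M}$ lies in the column space of $\ma[i]{S}$ and $\mat[i]{S}\ma[i]{M}$ is symmetric; equivalently $\ma[i]{M}=\ma[i]{S}\boldsymbol{\Lambda}_{i}$ with $\boldsymbol{\Lambda}_{i}:=\mat[i]{S}\ma[i]{M}=\boldsymbol{\Lambda}_{i}^{\top}$. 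Consensus configurations are critical with each $\boldsymbol{\Lambda}_{i}$ a positive multiple of $\ma[p]{I}$ and $U=0$.

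The heart of the argument is a second-order analysis at a critical point. Using that a curve on $\St$ through $\ma[i]{S}$ with velocity $\ma[i]{V}$ has normal acceleration $-\ma[i]{S}\mat[i]{V}\ma[i]{V}$, I would derive that the Riemannian Hessian acts on a product tangent vector $\mathbf{V}=(\ma[i]{V})_{i=1}^{N}$ by
\[
\Hess U(\mathbf{V},\mathbf{V})=-\sum_{i,j}a_{ij}\langle\ma[i]{V},\ma[j]{V}\rangle+\sum_{i}\langle\ma[i]{V}\boldsymbol{\Lambda}_{i},\ma[i]{V}\rangle ,
\]
which reduces to the $\Sn$ formula of \cite{markdahl2017tac} when $p=1$. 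From this I would extract two necessary conditions for a local minimum. First, probing with the perturbation $\mathbf{V}$ whose only nonzero component is $\ma[i]{V}=\ma{z}\mat{c}$, where $\ma{z}\in\R^{n}$ is a unit vector orthogonal to $\linspan\ma[i]{S}$ (such $\ma{z}$ exists because $p<n$) and $\ma{c}\in\R^{p}$, gives $\Hess U(\mathbf{V},\mathbf{V})=\mat{c}\boldsymbol{\Lambda}_{i}\ma{c}$, so every multiplier must satisfy $\boldsymbol{\Lambda}_{i}\succeq 0$ (in particular the ``antipodal-type'' critical points with $\boldsymbol{\Lambda}_{i}\prec 0$ are excluded). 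Second, for orthonormal $\ma{u},\ma{w}\in\R^{n}$ with associated rank-two orthogonal projector $\ma{P}=\ma{u}\mat{u}+\ma{w}\mat{w}$ and for any vertex subset $I\subset\V$, the directions $\mathbf{V}_{I}$ that rigidly rotate the agents in $I$ by the skew matrix $\ma{u}\mat{w}-\ma{w}\mat{u}$ and fix the rest satisfy
\[
\Hess U(\mathbf{V}_{I},\mathbf{V}_{I})+\Hess U(\mathbf{V}_{\V\setminus I},\mathbf{V}_{\V\setminus I})=2\sum_{i\in I}\sum_{j\in\V\setminus I}a_{ij}\langle\ma{P}\ma[i]{S},\ma{P}\ma[j]{S}\rangle ,
\]
because a common rotation of all agents preserves $U$ (so the full-support direction lies in the kernel of $\Hess U$) and the cross terms in the splitting cancel by the critical-point identity $\sum_{i}\ma[i]{S}\boldsymbol{\Lambda}_{i}\mat[i]{S}=\sum_{i,j}a_{ij}\ma[j]{S}\mat[i]{S}$. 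Thus, whenever some rank-two $\ma{P}$ and some edge cut make the right-hand side negative, the critical point is a saddle.

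It then remains (Sections~\ref{sec:critical}--\ref{sec:integer}) to show that a non-consensus critical point cannot survive all of these tests once $p\le\tfrac{2n}{3}-1$, and this is where I expect the genuine difficulty to lie. The $p=1$ case is already settled in \cite{markdahl2017tac}; for larger $p$ the plan is to combine $\boldsymbol{\Lambda}_{i}\succeq 0$ and the cut identity with further structured directions — notably ``common-direction'' perturbations $\ma[i]{V}=\Pi_{i}\ma{C}$ generalizing the sphere construction, possibly mixed with in-frame rotations — to show that these conditions force the configuration to be low-dimensional: the column spaces $\linspan\ma[i]{S}$ must overlap so strongly along the graph that, unless the configuration is consensus, one can locate a rank-two plane $\ma{P}$ and a cut for which the displayed sum is negative. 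The bookkeeping that converts ``the available dimension is large enough'' into the explicit threshold $p\le\tfrac{2n}{3}-1$ is the integer-counting step of Section~\ref{sec:integer}; this, together with separately dispatching the degenerate configurations (some $\ma[i]{M}=\zeromatrix$, frames that partially coincide, or planes $\ma{P}$ forced to lie partly inside the span of the configuration), is the main obstacle. Once these are cleared, every local minimizer — hence every minimizer — of $U$ lies in $\mathcal{C}$, \ie every connected graph is $\St$-synchronizing.
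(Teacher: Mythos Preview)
Your setup is sound: the first-order conditions $\ma[i]{M}=\ma[i]{S}\boldsymbol{\Lambda}_i$ with $\boldsymbol{\Lambda}_i$ symmetric, and the Hessian formula $\Hess U(\mathbf{V},\mathbf{V})=-\sum_{i}\sum_{j\in\Ni}a_{ij}\langle\ma[i]{V},\ma[j]{V}\rangle+\sum_i\langle\ma[i]{V}\boldsymbol{\Lambda}_i,\ma[i]{V}\rangle$, are correct, and reducing the theorem to ``every non-consensus critical point admits a negative Hessian direction'' is exactly the right framing. The gap is that you stop there. You bring in two auxiliary tools---the single-agent test giving $\boldsymbol{\Lambda}_i\succeq 0$ and the rank-two cut identity---and then defer the emergence of the threshold $p\le\tfrac{2n}{3}-1$ to ``the integer-counting step of Section~\ref{sec:integer}'' without indicating how your tools would ever produce that number.

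The paper uses neither of those tools. Its entire mechanism runs through the common-direction perturbations $\ma[i]{V}=\Pi_i\ma{\Delta}$ that you mention only in passing. One writes the Hessian restricted to this $np$-dimensional subspace as $\langle\vect\ma{\Delta},\ma{M}\vect\ma{\Delta}\rangle$, computes $\trace\ma{M}$ explicitly, and finds that it decomposes edge by edge as $2\sum_{e\in\E}f(\ma[i]{S},\ma[k]{S})$ with
\[
f(\ma{X},\ma{Y})=\bigl(n-\tfrac{p+1}{2}\bigr)\langle\ma{X},\ma{Y}\rangle-\tfrac{p+2}{4}\|\mat{X}\ma{Y}\|^{2}-\tfrac14\langle\ma{X},\ma{Y}\rangle^{2}+(1-n+p)p.
\]
Maximizing $f$ over $\St\times\St$ via the Lagrange conditions forces $\ma{Z}=\mat{X}\ma{Y}$ to be symmetric with spectrum in $\{-1,\lambda_*,1\}$ where $\lambda_*=(2n-p-1-\trace\ma{Z})/(p+2)$; the constraint $\lambda_*\le\|\ma{Z}\|_2\le 1$ then becomes an inequality on the multiplicities $(m_-,m_*,m_+)$ that collapses to $m_*=0$ (or $m_-=0$ on the boundary) precisely when $p\le\tfrac{2n}{3}-1$, after which a one-variable integer maximization shows $\max f=0$ is attained only at $\ma{Z}=\ma{I}$. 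The threshold is therefore not a dimension count on overlapping column spaces, as your outline suggests, but an algebraic consequence of the bound $\lambda_*\le 1$ on an explicit eigenvalue. Nothing in the cut identity or in $\boldsymbol{\Lambda}_i\succeq 0$ points toward this spectral reduction, so the missing piece is not bookkeeping---it is the mechanism itself.
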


\begin{proof} Let $q:\St^N\times\ts[\St]{}^N{\,}\rightarrow\R$ denote the quadratic form obtained from the intrinsic Hessian of $U$ evaluated at a critical point. It can either be calculated from the Lagrange optimality conditions or by linearizing the dynamics at an equilibrium; we take the latter approach. The second-order necessary conditions for unconstrained optimization over Riemannian manifolds imply that $q$ is weakly positive at any minimum $\{\ma[i]{S}\}_{i=1}^N$ of $U$. Our goal is to exclude minimality of  all equilibria $(\ma[i]{S})_{i=1}^N\notin\mathcal{C}$ by finding $(\ma[i]{\Delta})_{i=1}^N\in\mathsf{X}{\,}_{i=1}^N\ts[\St]{i}$ such that  $q((\ma[i]{S})_{i=1}^N,(\ma[i]{\Delta})_{i=1}^N))<0$. It is clear that all elements of $\mathcal{C}$ are global minimizers since $U\geq0$ and $U|_{\mathcal{C}}=0$.

Based on our previous work \cite{markdahl2017tac}, we consider perturbations along the tangent space of $\mathcal{C}$. We do not need to determine an exact expression for the desired perturbation, it suffices to prove that it exists. Showing that $q$ is negative amounts to solving a nonconvex constrained optimization problem. This is done using the Lagrange optimality conditions. Here, we introduce the inequality $p\leq\tfrac{2n}{3}-1$ to fix a variable in the optimization problem. For this case, we can show that there is a perturbation such that $\mathcal{C}$ maximizes $q$ with an objective value of zero. Any other equilibrium configuration gives a strictly negative objective value. Throughout all these steps, we never utilize any particular property of the graph topology besides connectedness. Hence the final result applies to any connected graph. It follows that all connected graphs are $\St$-synchronizing.\end{proof}

\subsection{Critical Points}
\label{sec:critical}

\noindent Let $V:(\R^{n\times p})^N\rightarrow[0,\infty)$ be the smooth extension of $U$ obtained by relaxing the requirement $(\ma[i]{S})_{i=1}^N\in\St^N$ for all $i\in\V$. Then
\begin{align}
\md[i]{S}&=-\inabla_iU=-\Pi_i\nabla_i V=\Pi_i\sum_{j\in\Ni}\ma[j]{S}\nonumber\\
&=\ma[i]{S}\skews\Bigl(\mat[i]{S}\sum_{j\in\Ni}\ma[j]{S}\Bigr)+(\ma[n]{I}-\ma[i]{S}\mat[i]{S})\sum_{j\in\Ni}\ma[j]{S}.\label{eq:stateeq}
\end{align}

\begin{remark}
The system of homogeneous state-space oscillators on $\St$ given by
\begin{align}
\md[i]{S}=\ma{\Omega}\ma[i]{S}+\ma[i]{S}\ma{\Xi}-\inabla_i U,\label{eq:homo}
\end{align}
where $\ma{\Omega}\in\so$ and $\ma{\Xi}\in\mathsf{so}(p)$ can be reduced to \eqref{eq:stateeq} by a change of variables. To verify this, let $\ma{R}=\exp(-t\ma{\Omega})\in\SO$, $\ma{Q}=\exp(-t\ma{\Xi})\in\mathsf{SO}(p)$, form $\ve[i]{X}=\ma{R}\ma[i]{S}\ma{Q}\in\St$, and calculate
\begin{align*}
\md[i]{X}={}&-\ma{\Omega}\ma{R}\ma[i]{S}\ma{Q}+\ma{Q}\md[i]{S}\ma{R}-\ma{R}\ma[i]{S}\ma{Q}\ma{\Xi}\\
={}&\ma[i]{X}\skews\Bigl(\mat[i]{X}\sum_{j\in\Ni}\ma[j]{X}\Bigr)+(\ma[n]{I}-\ma[i]{X}\mat[i]{X})\sum_{j\in\Ni}\ma[j]{X},
\end{align*}
where we used that $[\ma{\Omega},\ma{R}]=\ma{0}$,  $[\ma{\Xi},\ma{Q}]=\ma{0}$. Put $(p,n)=(1,2)$ and let $\mathcal{G}$ be the complete graph to obtain the Kuramoto model of a system of homogeneous oscillators from \eqref{eq:homo}.
\end{remark}

The critical points of $U$ are the equilibria of \eqref{eq:stateeq}. At an equilibrium,
\begin{align*}
\ma{0}&=\ma[i]{S}\skews\Bigl(\mat[i]{S}\sum_{j\in\Ni}\ma[j]{S}\Bigr)+(\ma[n]{I}-\ma[i]{S}\mat[i]{S})\sum_{j\in\Ni}\ma[j]{S}.
\end{align*}
Since these two terms are orthogonal, we get
\begin{align}
\skews\mat[i]{S}\sum_{j\in\Ni}\ma[j]{S}=\ma{0},\,(\ma[n]{I}-\ma[i]{S}\mat[i]{S})\sum_{j\in\Ni}\ma[j]{S}=\ma{0}.\label{eq:eq}
\end{align}
Assume \eqref{eq:eq} holds. Denote $\ve[i]{V}=\sum_{j\in\Ni}\ma[j]{S}$. Since $\ma[i]{V}=\ma[i]{S}\mat[i]{S}\ma[i]{V}$, it follows that $\ma[i]{V}\in\im\ma[i]{S}$. Hence $\ve[i]{V}=\ma[i]{S}\ma[i]{P}$ for some $\ma[i]{P}\in\R^{p\times p}$. Moreover, since $\skews\mat[i]{S}\ma[i]{V}=\skews\ma[i]{P}=\ma{0}$, we find that $\ma[i]{P}$ is symmetric.

\subsection{The Intrinsic Hessian}

\noindent Let $W_{i,st}:\R^{N\times n\times p}\rightarrow\R$ be the smooth extension of $(\inabla_i U)_{st}=\langle\ve[s]{e},\inabla_i U\ve[t]{e}\rangle:(\St)^N\rightarrow\R$ obtained by relaxing the constraint $\ma[i]{S}\in\St$ to $\ma[i]{S}\in\R^{n\times p}$ for all $i$. Using the rules governing derivatives of inner products with respect to matrices \cite{petersen2008matrix}, introducing  $\ma[st]{E}=\ve[s]{e}\vet[t]{e}$, after a few calculations, we obtain
\begin{align*}	
\nabla_kW_{i,st}={}&\begin{cases}
-\Pi_i\ma[st]{E} & \textrm{ if } k\in\Ni,\\
\ma[st]{E}\skews\left(\mat[i]{S}\sum_{j\in\Ni}\ma[j]{S}\right)+&\\
\sum_{j\in\Ni}\ma[j]{S}\syms(\mat[i]{S}\ma[st]{E})+& \\	
\ma[st]{E}\sum_{j\in\Ni}\mat[j]{S}\ma[i]{S}&\textrm{ if } k=i,\\
\ma{0}& \textrm{ otherwise.}
\end{cases}
\end{align*}
Evaluate at an equilibrium, where $\sum_{j\in\Ni}\ma[j]{S}=\ma[i]{S}\ma[i]{P}$ and $\ma[i]{P}\in\R^{p\times p}$ is symmetric by Section \ref{sec:critical}, to find
\begin{align*}
\nabla_kW_{i,st}={}&\begin{cases}
-\Pi_i\ma[st]{E} & \textrm{ if } k\in\Ni,\\
\ma[i]{S}\ma[i]{P}\syms(\mat[i]{S}\ma[st]{E})+\ma[st]{E}\ma[i]{P} & \textrm{ if } k=i,\\
\ma{0}& \textrm{ otherwise.}
\end{cases}
\end{align*}

The intrinsic Hessian is a $(N\times n\times p)^2$-tensor consisting of $N^2np$ blocks $\ma[ki,st]{H}\in\R^{n\times p}$, which are obtained by projecting the extrinsic Hesssian on the tangent space of $\ma[k]{S}$
\begin{align*}
\ma[ki,st]{H}&=\inabla_{k}(\inabla_{i}U)_{st}=\Pi_k\nabla_{k}W_{i,st}\\
&=\Pi_k\nabla_{k}(\Pi_i\nabla_iV)_{st}.
\end{align*}

\subsection{The Quadratic Form}

\noindent Consider the quadratic form $q:\St^N\times\ts[\St]{}^N{\,}\rightarrow\R$ obtained from the intrinsic Hessian evaluated at an equilibrium for some perturbation $(\ma[i]{\Delta})_{i=1}^N\in\ts[\mathcal{C}]{}$,
\begin{align*}
q={}&\sum_{i=1}^N\sum_{k=1}^N\langle\ma[i]{\Delta},[\langle\ma[k]{\Delta},\inabla_k(\inabla_i U)_{st}\rangle]\rangle\\
={}&\sum_{i=1}^N\sum_{k=1}^N\langle\Pi_i\ma{\Delta},[\langle\Pi_k\ma{\Delta},\Pi_k\nabla_kW_{i,st}\rangle]\rangle,
\end{align*}
where $\ma{\Delta}\in\R^{n\times p}$. 


Note that 
$\langle\Pi_k\ma{X},\Pi_k\ma{Y}\rangle=\langle\Pi_k\ma{X},\ma{Y}\rangle$. The quadratic form is hence
\begin{align*}
q={}&\sum_{i=1}^N\sum_{k=1}^N\langle\Pi_i\ma{\Delta},[\langle\Pi_k\ma{\Delta},\nabla_kW_{i,st}\rangle]\rangle.
\end{align*}
Denote $p_{ki,st}=\langle\Pi_k\ma{\Delta},\nabla_kW_{i,st}\rangle$. Then
\begin{align*}
p_{ki,st}=\begin{cases}
\langle\Pi_k\ma{\Delta},-\Pi_i\ma[st]{E}\rangle\\
\langle\Pi_i\ma{\Delta},\ma[i]{S}\ma[i]{P}\syms(\mat[i]{S}\ma[st]{E})+\ma[st]{E}\ma[i]{P}\rangle\\
\ma{0}
\end{cases}
\end{align*}
for the cases of $k\in\Ni$, $k=i$, and $k\notin\Ni\cup\{i\}$ respectively. Denote $p_{ki}=[p_{st}]$ and calculate
\begin{align*}
p_{ki}&=\begin{cases}
-\Pi_i\Pi_k\ma{\Delta} & \textrm{ if } k\in\Ni,\\
\ma[i]{S}\syms\mat[i]{V}\Pi_i\ma{\Delta}+\Pi_i(\ma{\Delta})\ma[i]{P} & \textrm{ if } k=i,\\
\ma{0} & \textrm{ otherwise.}
\end{cases}
\end{align*}

To see this, consider each case separately. For $k\in\Ni$,
\begin{align*}
p_{ki,st}={}&\langle(\ma[n]{I}-\Pi_i+\Pi_i)\Pi_k\ma{\Delta},-\Pi_i\ma[st]{E}\rangle\\
={}&-\langle\Pi_i\Pi_k\ma{\Delta},\ma[st]{E}\rangle=-(\Pi_i\Pi_k\ma{\Delta})_{st},
\end{align*}
whereby $p_{ki}=-\Pi_i\Pi_k\ma{\Delta}$. For the case of $k=i$,
\begin{align*}
p_{ii,st}={}&\langle\Pi_i\ma{\Delta},\ma[i]{S}\ma[i]{P}\syms(\mat[i]{S}\ma[st]{E})+\ma[st]{E}\ma[i]{P}\rangle\\
={}&\tfrac12(\ma[i]{S}\mat[i]{V}\Pi_i\ma{\Delta})_{st}+\tfrac12(\ma[i]{S}(\Pi_i\ma{\Delta})\mtr\ma[i]{V})_{st}+\\
&(\Pi_i(\ma{\Delta})\ma[i]{P})_{st},
\end{align*}
whereby $p_{ii}=\ma[i]{S}\syms\mat[i]{V}\Pi_i\ma{\Delta}+\Pi_i(\ma{\Delta})\ma[i]{P}$.

This gives us the quadratic form
\begin{align*}
q={}&\sum_{i=1}^N\sum_{k=1}^N\langle\Pi_i\ma{\Delta},p_{ki}\rangle\\
={}&\sum_{e\in\mathcal{E}}\langle\Pi_i\ma{\Delta},p_{ki}\rangle+\langle\Pi_k\ma{\Delta},p_{ik}\rangle+\sum_{i\in\mathcal{V}}\langle\Pi_i\ma{\Delta},p_{ii}\rangle
\end{align*}
For ease of notation, let $q=2\sum_{e\in\E}q_{ik}+\sum_{i\in\V}q_i$, where
\begin{align*}
q_{ik}={}&\langle\Pi_i\ma{\Delta},p_{ki}\rangle=-\langle\Pi_i\ma{\Delta},\Pi_k\ma{\Delta}\rangle=q_{ki},\\
q_i={}&\langle\Pi_i\ma{\Delta},p_{ii}\rangle.
\end{align*}
%


Calculate
\begin{align*}
q_{ik}={}&-\langle\Pi_i\ma{\Delta},\Pi_k\ma{\Delta}\rangle\\
={}&\trace(-\mat{\Delta}\ma{\Delta}+\tfrac12\mat{\Delta}\ma[i]{S}\mat[i]{S}\ma{\Delta}+\tfrac12\mat[i]{S}\ma{\Delta}\mat[i]{S}\ma{\Delta}+\\
&\hphantom{\trace(}\tfrac12\mat{\Delta}\ma[k]{S}\mat[k]{S}\ma{\Delta}+\tfrac12\mat{\Delta}\ma[k]{S}\mat{\Delta}\ma[k]{S}-\\
&\hphantom{\trace(}\tfrac14\mat{\Delta}\ma[i]{S}\mat[i]{S}\ma[k]{S}\mat[k]{S}\ma{\Delta}-\tfrac14\mat{\Delta}\ma[i]{S}\mat[i]{S}\ma[k]{S}\mat{\Delta}\ma[k]{S}-\\
&\hphantom{\trace(}\tfrac14\mat[i]{S}\ma{\Delta}\mat[i]{S}\ma[k]{S}\mat[k]{S}\ma{\Delta}-\tfrac14\mat[i]{S}\ma{\Delta}\mat[i]{S}\ma[k]{S}\mat{\Delta}\ma[k]{S}).
\end{align*}
Use the identity $\trace\ma{ABCD}=\langle\vect\mat{A},(\mat{D}\otimes\ma{B})\vect\ma{C}\rangle$ \cite{graham1981kronecker} and the notation $\ve[1]{d}=\vect\ma{\Delta}$, $\ve[2]{d}=\vect\mat{\Delta}$ to write $q_{ik}=\langle\ve{d},\ma[ik]{Q}\ve{d}\rangle$, where $\ma[ik]{Q}$ is given in Table \ref{tab:matrix} and $\ve{d}=[\vet[1]{d}\,\vet[2]{d}]\mtr$.
\begin{table*}
\begin{align*}
\ma[ik]{Q}={}&\begin{bmatrix}
-\ma[np]{I}+\tfrac12\ma[p]{I}\otimes(\ma[i]{S}\mat[i]{S}+\ma[k]{S}\mat[k]{S})-\tfrac14\ma[p]{I}\otimes\ma[i]{S}\mat[i]{S}\ma[k]{S}\mat[k]{S} & \tfrac12\mat[k]{S}\otimes\ma[k]{S}-\tfrac14\mat[k]{S}\otimes\ma[i]{S}\mat[i]{S}\ma[k]{S}\\
\tfrac12\ma[i]{S}\otimes\mat[i]{S}-\tfrac14\ma[i]{S}\otimes\mat[i]{S}\ma[k]{S}\mat[k]{S} & -\tfrac14\ma[i]{S}\mat[k]{S}\otimes\mat[i]{S}\ma[k]{S}
\end{bmatrix}
\end{align*}
\caption{The matrix $\ma[ik]{Q}$.\label{tab:matrix}}
\end{table*}
%

Furthermore,
\begin{align*}
q_i={}&\langle\Pi_i\ma{\Delta},\ma[i]{S}\syms\mat[i]{V}\Pi_i\ma{\Delta}+\Pi_i(\ma{\Delta})\ma[i]{P}\rangle=\langle\ve{d},\ma[i]{Q}\ve{d}\rangle,
\end{align*}
where
\begin{align*}
\ma[i]{Q}={}&\begin{bmatrix}
\ma[i]{P}\otimes\ma[n]{I}-\tfrac34\ma[i]{P}\otimes\ma[i]{S}\mat[i]{S} & \ma{0}\\
-\tfrac12\ma[i]{S}\ma[i]{P}\otimes\mat[i]{S} & \tfrac14\ma[i]{S}\ma[i]{P}\mat[i]{S}\otimes\ma[p]{I}
\end{bmatrix}.
\end{align*}

There is a constant permutation matrix $\ma{K}\in\mathsf{O}(np)$ such that $\vect\mat{\Delta}=\ma{K}\vect\ma{\Delta}$ for all $\vect\ma{\Delta}\in\R^{np}$ \cite{graham1981kronecker}. Hence
\begin{align*}
\ve{d}=\begin{bmatrix}
\vect\ma{\Delta}\\
\vect\mat{\Delta}\\
\end{bmatrix}=\begin{bmatrix}
\ma[np]{I}\\
\ma{K}
\end{bmatrix}
\vect\ma{\Delta}=\begin{bmatrix}
	\ma[np]{I}\\
	\ma{K}
\end{bmatrix}
\ve[1]{d}.
\end{align*}
The quadratic form $q$ satisfies $q=\langle\ve[1]{d},\ma{M}\ve[1]{d}\rangle$, where
\begin{align*}
\ma{M}=\begin{bmatrix}
\ma[np]{I} & \!\!\!\mat{K} \end{bmatrix}\ma{Q}\begin{bmatrix}
\ma[np]{I}\\
\ma{K}
\end{bmatrix},\quad\ma{Q}=\sum_{i\in\V}\ma[i]{Q}+\sum_{k\in\Ni}\ma[ik]{Q}.
\end{align*}
%


%
%

We wish to show that $q$ assumes positive values for some $\ma{\Delta}\in\R^{n\times p}$ at all equilibria $(\ma[i]{S})_{i=1}^N\notin\mathcal{C}$. If $\trace\ma{M}$ is positive, then the symmetric part of $\ma{M}$ has at least one positive eigenvalue. Hence calculate
\begin{align*}
\trace\ma{M}&=\trace\begin{bmatrix}
\ma[np]{I} & \!\!\!\mat{K}
\end{bmatrix}\ma{Q}\begin{bmatrix}
\ma[np]{I}\\
\ma{K}
\end{bmatrix}=\trace\left(\ma{Q}\begin{bmatrix}
\ma[np]{I} & \mat{K}\\
\ma{K} & \ma[np]{I}
\end{bmatrix}\right)\\
&=\trace\left(\begin{bmatrix}\ma{A} & \ma{B}\\
\ma{C} & \ma{D}\end{bmatrix}
\begin{bmatrix}
\ma[np]{I} & \mat{K}\\
\ma{K} & \ma[np]{I}
\end{bmatrix}\right)\\
&=\trace(\ma{A}+\ma{B}\ma{K}+\ma{C}\mat{K}+\ma{D}).
\end{align*}
Omitting the details of calculations, it holds that
\begin{align*}
\trace\ma{A}={}&2\sum_{e\in\E}\left(n-\tfrac{3p}4\right)\left\langle\ma[k]{S},\ma[i]{S}\right\rangle-(n-p)p-\tfrac{p}4\|\mat[k]{S}\ma[i]{S}\|^2,\\
\trace\ma{D}={}&2\sum_{e\in\E}\tfrac{p}4\left\langle\ma[k]{S},\ma[i]{S}\right\rangle-\tfrac14\langle\ma[k]{S},\ma[i]{S}\rangle^2.
\end{align*}
To deal with terms involving $\ma{K}$, we utilize that $\ma{K}=\sum_{a=1}^n\sum_{b=1}^p\ma[ab]{E}\otimes\ma[ba]{E}$, where the elemental matrix $\ma[ab]{E}\in\R^{n\times p}$ is given by $\ma[ab]{E}=\ve[a]{e}\otimes\ve[b]{e}$ for all $a\in\{1,\ldots,n\}$, $b\in\{1,\ldots,p\}$ \cite{graham1981kronecker}. After some calculations we obtain
\begin{align*}
\trace\ma{B}\ma{K}={}&2\sum_{e\in\E}\tfrac{p}2-\tfrac14\|\mat[k]{S}\ma[i]{S}\|^2,\\
\trace\ma{C}\mat{K}\!\!={}&2\sum_{e\in\E}-\tfrac12\langle\ma[k]{S},\ma[i]{S}\rangle+\tfrac{p}2-\tfrac14\|\mat[i]{S}\ma[k]{S}\|^2.
\end{align*}

Adding up all four terms gives
\begin{align}
\tfrac12\trace\ma{M}={}&\sum_{e\in\E}\left(n-\tfrac{p+1}2\right)\left\langle\ma[k]{S},\ma[i]{S}\right\rangle-\tfrac{p+2}4\|\mat[k]{S}\ma[i]{S}\|^2-\nonumber\\
&\hphantom{\sum_{e\in\E}}\,\,\,\tfrac14\langle\ma[k]{S},\ma[i]{S}\rangle^2+(1-n+p)p.\label{eq:trM}
\end{align}
At a consensus we get $\trace\ma{M}|_{\mathcal{C}}=0$. This is expected since $U$ is constant over $\mathcal{C}$ and $\mathcal{C}$ is invariant under any perturbation that belongs to its tangent space. We also note that the result \eqref{eq:trM} is consistent with that of \cite{markdahl2017tac}. 

\subsection{Nonlinear Programming Problem}
\label{sec:nlp}

\noindent Having determined $q=\trace\ma{M}$ in \eqref{eq:trM}, it remains to show that $\trace\ma{M}$ is strictly negative for any configuration $(\ma[i]{S})_{i=1}^N\notin\mathcal{C}$. To that end, consider
\begin{align}
\tfrac12\trace\ma{M}\leq{}& |\E|\max_{\ma{X},\ma{Y}}f(\ma{X},\ma{Y}),\label{ineq:C}\\
f(\ma{X},\ma{Y})={}&\left(n-\tfrac{p+1}2\right)\left\langle\ma{X},\ma{Y}\right\rangle-\tfrac{p+2}4\|\mat{X}\ma{Y}\|^2-\nonumber\\
&\tfrac14\langle\ma{X},\ma{Y}\rangle^2+(1-n+p)p,\label{eq:f}
\end{align}
where $f:\St\times\St\rightarrow\R$. Hence, if we can show that the image of $f$ is negative for all $\ma{X}\neq\ma{Y}$, then we are done. Note that the inequality is sharp in the case of two agents and that $f(\ma{X},\ma{X})=0$ since this corresponds to consensus in a system of two agents.

Consider the nonlinear, non-convex optimization problem
\begin{align}
\label{eq:opt}
\begin{split}
&\max f(\ma{X},\ma{Y})\,\,\textrm{s.t.}\,\ma{X},\ma{Y}\in\St,
\end{split}
\end{align}
where $f(\ma{X},\ma{Y})$ is given by \eqref{eq:f}. It follows from \eqref{ineq:C} that \eqref{eq:opt} is a relaxation of the problem $\max\trace\ma{M}$ such that $(\ma[i]{S})_{i=1}^N\in\St^N$ and the equations \eqref{eq:eq} hold.

Problem \eqref{eq:opt} can be solved through use of the Lagrange conditions for optimality. To that end, introduce the functions $g_{st}(\ma{X})=\langle\ma{X}\ve[s]{e},\ma{X}\ve[t]{e}\rangle-\delta_{st}$, where $\delta_{\cdot,\cdot}$ denotes the Kronecker delta. The constraints in \eqref{eq:opt} can be summarized as $g_{st}(\ma{X})=0,\,g_{st}(\ma{Y})=0$ for all $s,t\in\{1,\ldots,p\}$. Form the Lagrangian
\begin{align*}
\mathcal{L}=f(\ma{X},\ma{Y})+\sum_{s,t}\lambda_{st}g_{st}(\ma{X})+\xi_{st}g_{st}(\ma{Y}),
\end{align*}
where $\lambda_{st}$, $\xi_{st}$, $s,t\in\{1,\ldots,p\}$, are Lagrange multipliers.

Partial derivatives of the objective function are given by
\begin{align*}
\tfrac{\partial}{\partial\ma{X}}\mathcal{L}&=\left(n-\tfrac{p+1}{2}\right)\ma{Y}-\tfrac{p+2}2\ma{Y}\mat{Y}\ma{X }-\tfrac12\langle\ma{X},\ma{Y}\rangle\ma{Y}+\ma{X}\ma{\Lambda},\\
\tfrac{\partial}{\partial\ma{Y}}\mathcal{L}&=\left(n-\tfrac{p+1}2\right)\ma{X}-\tfrac{p+2}2\ma{X}\mat{X}\ma{Y}-\tfrac12\langle\ma{X},\ma{Y}\rangle\ma{X}+\ma{X}\ma{\Xi},
\end{align*}
where
\begin{align*}
\ma{\Lambda}&=\sum_{s,t}\lambda_{st}(\ve[s]{e}\vet[t]{e}+\ve[t]{e}\vet[s]{e})=[\lambda_{st}]+[\lambda_{ts}]=[\lambda_{st}+\lambda_{ts}],\\
\ma{\Xi}&=\sum_{s,t}\xi_{st}(\ve[s]{e}\vet[t]{e}+\ve[t]{e}\vet[s]{e})=([\xi_{st}]+[\xi_{ts}])=[\xi_{st}+\xi_{ts}],
\end{align*}
are symmetric. The critical points of $\mathcal{L}$ satisfy
\begin{align}
\left(n-\tfrac{p+1}{2}-\tfrac12\langle\ma{X},\ma{Y}\rangle\right)\ma{Y}-\tfrac{p+2}2\ma{Y}\mat{Y}\ma{X }+\ma{X}\ma{\Lambda}&=\ma{0},\label{eq:X}\\
\left(n-\tfrac{p+1}2-\tfrac12\langle\ma{X},\ma{Y}\rangle\right)\ma{X}-\tfrac{p+2}2\ma{X}\mat{X}\ma{Y}+\ma{Y}\ma{\Xi}&=\ma{0},\label{eq:Y}
\end{align}
$[g_{st}(\ma{X})]=\mat{X}\ma{X}-\ma{I}=\ma{0}$, and $[g_{st}(\ma{Y})]=\mat{Y}\ma{Y}-\ma{I}=\ma{0}$.

Solve this system for $\ma{\Lambda}$ and $\ma{\Xi}$. Introduce $\ma{Z}=\mat{X}\ma{Y}$. Multiply \eqref{eq:X} and \eqref{eq:Y} from the left by $\mat{X}$ and $\mat{Y}$ respectively, to find
\begin{align*}
\ma{\Lambda}=-\left(n-\tfrac{p+1}{2}-\tfrac12\trace\ma{Z}\right)\ma{Z}+\tfrac{p+2}2\ma{Z}\mat{Z },\,\,
\ma{\Xi}&=\mat{\Lambda}.
\end{align*}
Since $\ma{\Lambda}$ and $\ma{\Xi}$ are symmetric, it holds that
\begin{align*}
\skews\ma{\Lambda}=-\left(n-\tfrac{p+1}{2}-\tfrac12\trace\ma{Z}\right)\skews\ma{Z}=-\skews\ma{\Xi}=\ma{0}.
\end{align*}
Note that $n-\tfrac{p+1}{2}-\tfrac12\trace\ma{Z}\geq n-p-\tfrac12>0$ for $p<n$ since $\trace\ma{Z}\leq p$, wherefore $\skews\ma{Z}=\ma{0}$ (the case of $p=n$ is  excluded from consideration since $\mathsf{St}(n,n)\simeq\mathsf{O}(n)$ is not path connected). This implies that $\ma{\Xi}=\ma{\Lambda}$.

Substitute
\begin{align*}
\ma{\Lambda}=-\left(n-\tfrac{p+1}{2}-\tfrac12\trace(\ma{Z})\right)\ma{Z}+\tfrac{p+2}2\ma{Z }^2
\end{align*}
into the equation $\mat{Y}\nabla_{\ma{X}}\mathcal{L}=\ma{0}$ to find
\begin{align*}
\mat{Y}\nabla_{\ma{X}}\mathcal{L}={}&\left(n-\tfrac{p+1}{2}-\tfrac12\trace\ma{Z}\right)\ma{I}-\tfrac{p+2}2\ma{Z}+\\
&\ma{Z}\left(-\left(n-\tfrac{p+1}{2}-\tfrac12\trace(\ma{Z})\right)\ma{Z}+\tfrac{p+2}2\ma{Z }^2\right)=\ma{0}
\end{align*}
By simplifying, we obtain
\begin{align*}
\mat{Y}\nabla_{\ma{X}}\mathcal{L}&=\left(\left(n-\tfrac{p+1}{2}-\tfrac12\trace\ma{Z}\right)-\tfrac{p+2}2\ma{Z}\right)(\ma{I}+\ma{Z})(\ma{I}-\ma{Z}).
\end{align*}
It follows that
\begin{align}
p(z)&=\left(\tfrac{2n-p-1-\trace\ma{Z}}{p+2}-z\right)(1+z)(1-z)\label{eq:minimal}
\end{align}
is the minimal polynomial of $\ma{Z}$, up to the exclusion of any factors corresponding to non-singular matrices. 

\subsection{Integer Programming Problem}
\label{sec:integer}
\noindent Reformulate the nonlinear programming problem \eqref{eq:opt} as an integer programming problem in terms of the algebraic multiplicities of the eigenvalues of $\ma{Z}$. By \eqref{eq:minimal}, the spectrum of $\ma{Z}$ satisfies  $\sigma\subset\{-1,\lambda_*,1\}$, where
\begin{align}
\lambda_*=\tfrac{2n-p-1-\trace\ma{Z}}{p+2}.\label{eq:lambda}
\end{align} 
Let $m_-,m_*$, and $m_+\in\{0,\ldots,p\}$ denote the algebraic multiplicities of the  eigenvalues $-1,\lambda_*$, and $1$ respectively. First note that $\lambda_*\leq\|\ma{Z}\|_2\leq\|\ma{X}\|_2\|\ma{Y}\|_2=1$, wherefore $m_*=0$ if $\lambda_*>1$. We know an expression for $\lambda_*$ in terms of $\trace\ma{Z}$. We also know that $\trace\ma{Z}=-m_-+\lambda^*m_*+m_+$ and $p=m_-+m_*+m_+$. Solving for $\lambda_*$ in terms of $m_-$ and $m_*$, we find 
\begin{align*}
\lambda_*=\tfrac{2n-2p-1+2m_-+m_*}{p+2+m_*}.
\end{align*}
From the inequality $\lambda^*\leq1$ we obtain $m_-\leq\tfrac32(p+1)-n$. This contradicts $m_-\geq0$ if $p<\tfrac{2n}3-1$, in which case $m_*=0$. For the case of $p=\tfrac{2n}3-1$ we learn that either $m_*=0$ or $m_-=0$.  The case of $p>\tfrac{2n}3-1$ is less informative.

We consider two cases: that of $m_*=0$ for general $p$ and that of $m_-=0$ for $p=\tfrac{2n}3-1$. These two cases exhaust all instances of $\St$ for which the pair $(p,n)$ satisfies $p\leq\tfrac{2n}3-1$. Start with $m_*=0$. Then 
\begin{align*}
\trace\ma{Z}&=-m_-+m_+=2m_+-p,\\
\trace\ma{Z}^2&=m_-+m_+=p,\\
(\trace\ma{Z})^2&=4m_+^2-4pm_++p^2.
\end{align*}
Recast the nonlinear program \eqref{eq:opt} as the equivalent quadratic integer program
\begin{align}
\label{eq:int}
\max\,(2n+1)m_+-m_+^2
\,\,\,\textrm{s.t. }\,m_+\in\{0,\ldots,p\}
\end{align}
where we used $\ma{Z}=\mat{X}\ma{Y}$, $\langle\ma{X},\ma{Y}\rangle=\trace\ma{Z}$, $\|\ma{Z}\|^2=\trace\ma{Z}^2$, and removed the constant terms. 

Denote the objective function of Problem \eqref{eq:int} by $h:\N\rightarrow\R$. Note that $h(m_++1)=h(m_+)+2(n-m_+)$. Since $m_+\leq p\leq n$, the optimization problem is solved by maximizing $m_+$, \ie $m_+=p$. This corresponds to $\ma{Z}=\ma{I}$ and the optimal value of zero in \eqref{eq:opt}. Any suboptimal solution gives a strictly lower objective value. 

It remains to consider the case of $p=\tfrac{2n}{3}-1$, $m_-=0$. Calculate
\begin{align*}
\trace\ma{Z}&=\lambda_*m_*+m_+=p+(\lambda_*-1)m_*\\
\trace\ma{Z}^2&=p+(\lambda_*+1)(\lambda_*-1)m_*\\
(\trace\ma{Z})^2&=p^2+2(\lambda_*-1)pm_*+(\lambda_*-1)^2m_*^2.
\end{align*}
Recast the nonlinear program \eqref{eq:opt} as the equivalent mixed integer nonlinear program
\begin{align}
\label{eq:int2}
\begin{split}
&\max\,-(\tfrac{n}6+\tfrac14+\tfrac14m_*)(1-\lambda_*)^2m_*,\\
&\,\,\textrm{s.t. }\,\,\,\,\lambda_*\in[0,1],\,\,m_*\in\{0,\ldots,p\},
\end{split}
\end{align}
where the constant term has been removed. It is clear that the optimal solution has $\lambda_*=1$ or $m_*=0$. It follows that $\sigma(\ma{Z})=\{1\}$, \ie $\ma{Z}=\ma{I}$.

\section{Conclusions}

\noindent This paper extends the results of \cite{markdahl2017tac} concerning almost global stability of a state space oscillator on $\Sn$ for $n\geq2$ to the case of $\St$. We characterize all connected graphs as $\St$-synchronizing provided that the pair $(p,n)$ satisfies $p\leq\tfrac{2n}{3}-1$. This inequality is sharp with respect to known results: some connected graphs are not $\GC$-synchronizing since $\GC\simeq\mathsf{St}(1,2)$ and $1\nleq\tfrac13$ \cite{sarlette2009geometry}; all connected graphs are $\mathcal{S}^{n-1}$-synchronizing since $\mathcal{S}^{n-1}\simeq\mathsf{St}(1,n)$ and $1\leq\tfrac{2n}3-1$ for $n\geq2$ \cite{markdahl2017tac}; some connected graphs are not $\SOT$-synchronizing since $\SOT\simeq\mathsf{St}(2,3)$ and $2\nleq1$ \cite{tron2012intrinsic}. Still, these results are likely to be conservative for $n\geq4$ due to the derivations involving a number of inequalities. 

\bibliographystyle{unsrt}
\bibliography{cdc2018bib}

\begin{thebibliography}{10}

\bibitem{sarlette2009geometry}
A.~Sarlette.
\newblock {\em Geometry and symmetries in coordination control}.
\newblock PhD thesis, Li\`{e}ge University, 2009.

\bibitem{afsari2011riemannian}
B.~Afsari.
\newblock Riemannian ${L}^p$ center of mass: Existence, uniqueness, and
  convexity.
\newblock {\em Proceedings of the American Mathematical Society},
  139(2):655--673, 2011.

\bibitem{tron2013riemannian}
R.~Tron, B.~Afsari, and R.~Vidal.
\newblock Riemannian consensus for manifolds with bounded curvature.
\newblock {\em \textsc{ieee} Transactions on Automatic Control},
  58(4):921--934, 2013.

\bibitem{hartley2013rotation}
R.~Hartley, J.~Trumpf, Y.~Dai, and H.~Li.
\newblock Rotation averaging.
\newblock {\em International journal of computer vision}, 103(3):267--305,
  2013.

\bibitem{zhu2013synchronization}
J.~Zhu.
\newblock Synchronization of {K}uramoto model in a high-dimensional linear
  space.
\newblock {\em Physics Letters A}, 377(41):2939--2943, 2013.

\bibitem{thunberg2014distributed}
J.~Thunberg, W.~Song, E.~Montijano, Y.~Hong, and X.~Hu.
\newblock Distributed attitude synchronization control of multi-agent systems
  with switching topologies.
\newblock {\em Automatica}, 50(3):832--840, 2014.

\bibitem{lageman2016consensus}
C.~Lageman and Z.~Sun.
\newblock Consensus on spheres: Convergence analysis and perturbation theory.
\newblock In {\em Proceedings of the 55th \textsc{ieee} Conference on Decision
  and Control}, pages 19--24, 2016.

\bibitem{dorfler2014synchronization}
F.~D{\"o}rfler and F.~Bullo.
\newblock Synchronization in complex networks of phase oscillators: A survey.
\newblock {\em Automatica}, 50(6):1539--1564, 2014.

\bibitem{sepulchre2011consensus}
R.~Sepulchre.
\newblock Consensus on nonlinear spaces.
\newblock {\em Annual reviews in control}, 35(1):56--64, 2011.

\bibitem{scardovi2007synchronization}
L.~Scardovi, A.~Sarlette, and R.~Sepulchre.
\newblock Synchronization and balancing on the {$N$}-torus.
\newblock {\em Systems \& Control Letters}, 56(5):335--341, 2007.

\bibitem{tron2012intrinsic}
R.~Tron, B.~Afsari, and R.~Vidal.
\newblock Intrinsic consensus on {SO}(3) with almost-global convergence.
\newblock In {\em Proceedings of the 51st \textsc{ieee} Conference on Decision
  and Control}, pages 2052--2058, 2012.

\bibitem{thunberg2017dynamic}
J.~Thunberg, J.~Markdahl, and J.~Goncalves.
\newblock Dynamic controllers for column synchronization of rotation matrices:
  a {QR}-factorization approach.
\newblock {\em Automatica}, 93:20--25, 2018.

\bibitem{mesbahi2010graph}
M.~Mesbahi and M.~Egerstedt.
\newblock {\em Graph Theoretic Methods in Multi-Agent Networks}.
\newblock Princeton University Press, 2010.

\bibitem{sarlette2009consensus}
A.~Sarlette and R.~Sepulchre.
\newblock Consensus optimization on manifolds.
\newblock {\em SIAM Journal on Control and Optimization}, 48(1):56--76, 2009.

\bibitem{olfati2006swarms}
R.~Olfati-Saber.
\newblock Swarms on the sphere: A programmable swarm with synchronous behaviors
  like oscillator networks.
\newblock In {\em Proceedings of the 45th \textsc{ieee} Conference on Decision
  and Control}, pages 5060--5066, 2006.

\bibitem{li2014unified}
W.~Li and M.W. Spong.
\newblock Unified cooperative control of multiple agents on a sphere for
  different spherical patterns.
\newblock {\em \textsc{ieee} Transactions on Automatic Control},
  59(5):1283--1289, 2014.

\bibitem{sarlette2009autonomous}
A.~Sarlette, R.~Sepulchre, and N.E. Leonard.
\newblock Autonomous rigid body attitude synchronization.
\newblock {\em Automatica}, 45(2):572--577, 2009.

\bibitem{helmke2012optimization}
U.~Helmke and J.B. Moore.
\newblock {\em Optimization and dynamical systems}.
\newblock Springer, 2012.

\bibitem{markdahl2017tac}
J.~Markdahl, J.~Thunberg, and J.~Gon\c{c}alves.
\newblock Almost global consensus on the $n$-sphere.
\newblock {\em \textsc{ieee} Transactions on Automatic Control}, 63:1664--1675,
  2018.

\bibitem{absil2009optimization}
P-A Absil, Robert Mahony, and Rodolphe Sepulchre.
\newblock {\em Optimization algorithms on matrix manifolds}.
\newblock Princeton University Press, 2009.

\bibitem{petersen2008matrix}
K.B. Petersen, M.S. Pedersen, et~al.
\newblock {\em The Matrix Cookbook}.
\newblock Denmark Technical University, 2008.

\bibitem{graham1981kronecker}
A.~Graham.
\newblock {\em Kronecker Products and Matrix Calculus: With Applications}.
\newblock Wiley, 1981.

\end{thebibliography}

\end{document}